\newcommand{\field}{\mathbb}
\newcommand{\reals}{\field{R}}
\newcommand{\euclideans}{\field{E}}
\newcommand{\ie}{\textit{i.e.,}~}
\newcommand{\eg}{\textit{e.g.,}~}
\newcommand{\ia}{\textit{inter alia}}
\newcommand{\uii}{\bm{i}^1}
\newcommand{\ujj}{\bm{i}^2}
\newcommand{\ukk}{\bm{i}^3}
\newcommand{\zero}{\bm{0}}
\newcommand{\coor}{\bm{q}}
\newcommand{\weight}{\bm{C}}
\newcommand{\strain}{\bm{e}}
\newcommand{\stress}{\bm{s}}
\newcommand{\onehalf}{\frac{1}{2}}
\newcommand{\bxi}{\bm{\xi}}
\newcommand{\force}{\bm{f}}
\newcommand{\lagrangean}{\mathcal{L}}
\newcommand{\cost}{\mathcal{J}}
\newcommand{\cmanifold}{\mathcal{Z}}
\newcommand{\datacmanifold}{\widetilde{\cmanifold}}
\newcommand{\reccmanifold}{\check{\cmanifold}}
\newcommand{\blambda}{\bm{\lambda}}
\newcommand{\bmu}{\bm{\mu}}
\newcommand{\bnu}{\bm{\nu}}
\newcommand{\bchi}{\bm{\chi}}
\newcommand{\bphi}{\bm{\varphi}}
\newcommand{\btheta}{\bm{\theta}}
\newcommand{\pd}[1]{\partial_{#1}}
\newcommand{\ba}{\bm{a}}
\newcommand{\bg}{\bm{g}}
\newcommand{\bx}{\bm{x}}
\newcommand{\bG}{\bm{G}}
\newcommand{\bn}{\bm{n}}
\newcommand{\gl}{\bm{E}_{\flat}}
\newcommand{\rotation}{\bm{\Lambda}}
\newcommand{\KKT}{\bm{S}}
\newcommand{\refe}{\mathrm{ref}}
\newcommand{\ofcoor}{(\coor)}
\newcommand{\bgamma}{\bm{\gamma}}
\newcommand{\bomega}{\bm{\omega}}
\newcommand{\coorz}{\bm{\varphi}_0}
\newcommand{\coori}{\bm{d}_1}
\newcommand{\coorj}{\bm{d}_2}
\newcommand{\coork}{\bm{d}_3}
\newcommand{\bforce}{\bm{n}}
\newcommand{\bmoment}{\bm{m}}
\newcommand{\bI}{\bm{I}}
\newcommand{\bB}{\bm{B}}
\newcommand{\bh}{\bm{h}}
\newcommand{\bU}{\bm{U}}
\newcommand{\bV}{\bm{V}}
\newcommand{\coorihat}{\widehat{\coori}}
\newcommand{\coorjhat}{\widehat{\coorj}}
\newcommand{\coorkhat}{\widehat{\coork}}
\newcommand{\nullproj}{\bm{N}(\coor)}
\newcommand{\straincoor}{\strain(\coor)}
\newcommand{\straindata}{\widetilde{\strain}}
\newcommand{\strainrec}{\check{\strain}}
\newcommand{\iweight}{\weight^{-1}}
\newcommand{\stressdata}{\widetilde{\stress}}
\newcommand{\stressrec}{\check{\stress}}
\newcommand{\bBcoor}{\bB\ofcoor}
\newcommand{\bBcoorT}{\bB\ofcoor^T}
\newcommand\nstrain{n_{\strain}}
\newcommand\fixNLP{$\mathrm{NLP}(\straindata, \stressdata)$}
\newcommand\sdot{^T}
\newcommand\sfix{_{\mathrm{fix}}}
\newcommand\sone{}
\newcommand\rdot{{}\cdot{}}
\newcommand{\arclen}{\sigma}
\newcommand{\pdarclen}{\partial_\arclen}
\newcommand{\Boper}{\mathfrak{B}}
\newcommand{\DD}{Data-Driven}
\newcommand{\DDCD}{\DD\ Computational Dynamics}
\newcommand{\DDCM}{\DD\ Computational Mechanics}
\newcommand{\GEB}{Geometrically Exact Beam}
\let\oldforce\force
\renewcommand{\force}{\oldforce^{\textrm{ext}}}
\newcommand{\mass}{\bm{M}}
\newcommand{\timeindex}{i}
\newcommand{\sn}{_{\timeindex}}
\newcommand{\snp}{_{\timeindex+1}}
\newcommand{\snm}{_{\timeindex-1}}
\newcommand{\snpm}{_{\timeindex+\onehalf}}
\newcommand{\snmm}{_{\timeindex-\onehalf}}
\newcommand{\bBcoornmm}{\bB(\coor\snmm)}
\newcommand{\bBcoornmmT}{\bBcoornmm^T}
\newcommand{\bBcoornpm}{\bB(\coor\snpm)}
\newcommand{\bBcoornpmT}{\bBcoornpm^T}
\newcommand{\timestep}{\Delta t}
\newcommand{\bF}{\bm{F}}
\newcommand{\bFstressnpm}{\bF(\stress\snpm)}
\newcommand{\bFstressnpmT}{\bFstressnpm^T}
\newcommand{\bgcoor}{\bg(\coor)}
\newcommand{\bGcoor}{\bG(\coor)}
\newcommand{\bGcoorT}{\bGcoor^T}
\newcommand{\bgcoornp}{\bg(\coor\snp)}
\newcommand{\bGcoorn}{\bG(\coor\sn)}
\newcommand{\bGcoornT}{\bGcoorn^T}
\newcommand{\bGcoornp}{\bG(\coor\snp)}
\newcommand{\bGcoornpT}{\bGcoornp^T}
\newcommand{\straincoornpm}{\strain(\coor\snpm)}
\newcommand{\nullmatcoorn}{\bm{N}(\coor\sn)}
\newcommand{\nullmatcoornT}{\nullmatcoorn^T}
\newcommand{\pseudobalance}{\bm{f}(\coor\snp,\stress\snpm)}
\let\oldstraindata\straindata
\renewcommand{\straindata}{\oldstraindata\snpm}
\let\oldstressdata\stressdata
\renewcommand{\stressdata}{\oldstressdata\snpm}
\let\oldstrainrec\strainrec
\renewcommand{\strainrec}{\oldstrainrec\snpm}
\let\oldstressrec\stressrec
\renewcommand{\stressrec}{\oldstressrec\snpm}
\newcommand{\bhstrainstress}{\bh(\strainrec, \stressrec)}
\let\oldblambda\blambda
\renewcommand{\blambda}{\oldblambda\snpm}
\let\oldbmu\bmu
\renewcommand{\bmu}{\oldbmu\sn}
\let\oldbnu\bnu
\renewcommand{\bnu}{\oldbnu\snp}
\let\oldbxi\bxi
\renewcommand{\bxi}{\oldbxi\snpm}
\newcommand{\argvardyn}{(\coor\snp,\strain\snpm,\stress\snpm,\blambda,\bmu,\bnu;\straindata,\stressdata)}
\newcommand{\argvardynapp}{(\strainrec,\stressrec,\coor\snp,\strain\snpm,\stress\snpm,\blambda,\bmu,\bnu,\bxi)}
\newcommand{\mytime}{t}
\newcommand{\mytimed}{\mytime\snpm}
\newcommand{\area}{\mathcal{A}_0}
\begin{document}

\newtheorem{theorem}{Theorem}[section]

\theoremstyle{definition}
\newtheorem{definition}{Definition}[section]

\def\keywords{\vspace{.5em}{\noindent \textit{Keywords}:\,\relax}}
\def\endkeywords{\par}

\providecommand{\keywords}[1]{\textbf{\textit{Keywords:}} #1}

\title{A framework for \DDCD\\ based on nonlinear optimization}

\author{Cristian Guillermo Gebhardt, Marc Christian Steinbach, Dominik Schillinger, Raimund Rolfes}

\date{}

\maketitle


\begin{abstract}
\noindentIn this article, we present an extension of the formulation recently developed by the authors (\textit{A Framework for \DDCM~Based on Nonlinear Optimization}, arXiv:1910.12736 [math.NA]) to the structural dynamics setting. Inspired by a structure-preserving family of variational integrators, our new formulation relies on a discrete balance equation that establishes the dynamic equilibrium. From this point of departure, we first derive an ``exact'' discrete-continuous nonlinear optimization problem that works directly with data sets. We then develop this formulation further into an ``approximate'' nonlinear optimization problem that relies on a general constitutive model. This underlying model can be identified from a data set in an offline phase. To showcase the advantages of our framework, we specialize our methodology to the case of a geometrically exact beam formulation that makes use of all elements of our approach. We investigate three numerical examples of increasing difficulty that demonstrate the excellent computational behavior of the proposed framework and motivate future research in this direction.
\end{abstract}

\keywords{
data-driven computational dynamics,
finite elements,
structure-preserving time integration,
nonlinear optimization problem,
geometrically exact beams
}

\section{Introduction}

\DDCM~is a new computing philosophy that enables the evolution from conventional data-free methods to modern data-rich approaches. Its underlying concept relies on the reformulation of classical boundary value problems of elasticity and inelasticity such that material models, which are calibrated from experiments, are replaced by some form of experimental material data. On the one hand, \DDCM~eliminates some modeling errors and the associated uncertainty by employing experimental data directly. On the other hand, new sources of error emerge that are associated with the measurements and with the particular measuring technology employed. At least for the moment, there is no consensus regarding which sources of error are the most severe and therefore, there is still a long way to go.

Among recent developments, two principal approaches of \DDCM~can be distinguished. On the one hand, there is a \textit{direct} one \cite{Kirchdoerfer2016,Kirchdoerfer2017}, whose methods are based on a discrete-continuous optimization problem that attempts to assign to each material point a point in the phase space that under fulfillment of the compatibility and equilibrium constraints is closest to the data set provided. Within that framework, some very interesting advances are reported, for instance, regarding nonlinear elasticity \cite{Nguyen2018}, general elasticity \cite{Conti2018}, inelasticity \cite{Eggersmann2019}, and mixed-integer quadratic optimization problems \cite{Kanno2019}. On the other hand, there is an \textit{inverse} one \cite{Ibanez2017,Ibanez2018a,Ibanez2019}, whose methods rely on an inverse approach that attempts to reconstruct from the data sets provided a constitutive manifold with a well-defined functional structure. In the context of these two families of methods, our recent work on approximate nonlinear optimization problems \cite{Gebhardt2019d} represents a \textit{hybrid} approach, targeting at a synergistic compromise that combines their strengths and mitigates some of their main weaknesses, in particular the high computational cost associated with the resolution of a discrete-continuous nonlinear optimization problem for the \textit{direct} approach and the limitation to a special functional structure that only allows the explicit definition of stresses for the \textit{inverse} approach.

\DDCD, the application of \DDCM~principles to structural dynamics problems, is currently less developed, with a few papers published so far. For instance, in \cite{Kirchdoerfer2018}, the data-driven solvers for quasistatic problems developed in \cite{Kirchdoerfer2016,Kirchdoerfer2017} were extended to dynamics, relying on variational time-stepping schemes such as the Newmark algorithm. In \cite{Gonzalez2019}, a thermodynamically consistent approach that relies on the ``General Equation for Non-Equilibrium Reversible-Irreversible Coupling'' formalism was presented, which enforces by design the conservation of energy and positive production of entropy.

The central goal of the present work is the formulation of an approximate nonlinear optimization problem for \DDCD, which can be understood as the structural dynamics counterpart of the formulation previously developed by the authors in \cite{Gebhardt2019d}. The proposed approximate nonlinear optimization problem relies on a discrete balance equation, which is inspired by a class of variational integrators \cite{Marsden2001,Lew2003,Lew2004,Kale2007,Fong2008,Leyendecker2008,Betsch2010,Lew2016} and represents the dynamic equilibrium. Since in our approach, no special functional structure of the constitutive manifold is assumed, the existence of an energy function is in general not guaranteed, and therefore, energy-momentum methods \cite{Gonzalez1996,McLachlan1999,Betsch2010,Romero2012,Gebhardt2019c} are not directly applicable. Firstly, the proposed framework improves computational efficiency and robustness with respect to the type of solvers that rely on discrete-continuous optimization problems. In particular, our approximate nonlinear optimization problem can be solved with local Sequential Quadratic Programming methods, circumventing the necessity of employing meta-heuristic methods. Secondly, the proposed framework can deal with implicitly defined stress-strain relations and kinematic constraints, thus enlarging its range of applicability. Lastly, we consider the case of a geometrically exact beam element to demonstrate the advantages of our approach. Such a finite element model makes full use of our computational machinery. Be aware that our primary goal is a proof of concept for our new approximate nonlinear optimization approach for \DDCD~and therefore, the identification of the underlying constitutive manifold is not addressed here.

The remainder of this work is organized as follows: Section 2, the core of this article, presents two optimization problems for \DDCD~that are built upon a time integration approach inspired by a class of structure-preserving methods. The first one is an ``exact'' discrete-continuous nonlinear optimization problem that works directly with data sets. Such a problem can be considered as the starting point and is not going to be solved within this work. The second one is an ``approximate'' nonlinear optimization problem that relies on a general approximation of the underlying constitutive manifold, which circumvents completely the necessity of online handling of data sets. For both problems, we define the associated Lagrangian functions and derive explicitly the first order optimality conditions as well as the corresponding KKT matrices. In section 3, we specialize the proposed methodology for the geometrically exact beam finite element in a purely dynamic setting. This particular structural model has been chosen because it makes use of all elements of our approach. Section 4 presents simulation results that illustrate the capability of the derived approach with special emphasis on preserved quantities along the discrete motion, which is seen as the solution of a sequence of successive nonlinear optimization problems. Finally, in Section 5, we draw concluding remarks and propose future work.

\section{Nonlinear optimization problems}

The definition of successive nonlinear optimization problems in \DDCM\
implies the partitioning of the considered time interval
$[\mytime_a, \mytime_b]$ into subintervals $[\mytime\sn, \mytime\snp]$
such that $\mytime_a = \mytime_0 < \dots < \mytime_N = \mytime_b$.
We consider an equidistant partitioning by a fixed time step,
\ie $\mytime\snp - \mytime\sn = \timestep$ $\forall \timeindex \in [0, N - 1]$.
A very simple scalar cost function to be minimized at time instant
$\mytimed \in (\mytime\sn, \mytime\snp)$ can be defined as
\begin{equation}
  \label{eq:cost}
  \cost(\oldstraindata\snpm, \oldstressdata\snpm, \strain\snpm, \stress\snpm)
  =
  \frac12 \|\strain\snpm - \oldstraindata\snpm\|^2_{\weight} +
  \frac12 \|\stress\snpm - \oldstressdata\snpm\|^2_{\iweight}
  ,
\end{equation}
where the pair $(\strain\snpm, \stress\snpm) \in \cmanifold$
denotes continuous strain and stress variables
$\strain\snpm$ and $\stress\snpm$, respectively,
a given finite data set $\datacmanifold$
contains strain and stress measurements
$(\oldstraindata\snpm, \oldstressdata\snpm)$,
$\weight \in \reals^{\nstrain \times \nstrain}$
is a symmetric positive-definite weight matrix with inverse $\iweight$,
and $\|\cdot\|_{\weight}$ and $\|\cdot\|_{\iweight}$
are norms derived from an inner product.
At this point, there is no necessity to specify $\nstrain$
since it depends on the structural model considered,
which for now remains unspecified.
The cost function \eqref{eq:cost} has to be minimized
under the following constraints:
\textit{i}) the compatibility equation that enforces the equivalence
between strain variables and displacement-based strains
at time instant $\mytime\snpm$,
\begin{equation}
  \strain\snpm - \straincoornpm = \zero,
\end{equation}
in which $\coor\snpm \in Q \subset \reals^{m+n}$
is the vector of generalized coordinates
and $Q$ stands for the configuration manifold;
\textit{ii}) the discrete balance equation
that establishes the dynamic equilibrium,
for instance, we chose an approximation
inspired by a family of variational integrators
\cite{Marsden2001,Lew2003,Lew2004,Kale2007,Fong2008,Leyendecker2008,Betsch2010,Lew2016}
that renders the dynamic equilibrium at time instant $\mytime\sn$ as
\begin{equation}
  \label{eq:equilibrium}
  \mass \frac{\coor\snp - 2 \coor\sn + \coor\snm}{\timestep} +
  \frac{\timestep}{2} \bigl(
    \bBcoornmmT \stress\snmm + \bBcoornpmT\stress\snpm
  \bigr) + \timestep\bGcoornT \bchi\sn -
  \frac{\timestep}{2} \bigl( \force\snmm + \force\snpm \bigr) = \zero,
\end{equation}
in which $\mass \in T_{\coor}^*Q \times T_{\coor}^*Q$
represents the constant mass matrix,
\begin{equation}
  \coor\snmm \approx \frac{\coor\snm + \coor\sn}{2}
  \qquad\textrm{and}\qquad
  \coor\snpm \approx \frac{\coor\sn + \coor\snp}{2},
\end{equation}
$\bBcoornpm = \pd{\coor\snpm} \strain(\coor\snpm)
\in L(T_{\coor\snpm} Q, \reals^{n_e})$
is the Jacobian matrix of the displacement-based strains,
similarly for $\bBcoornmm$,
$\bGcoorn = \pd{\coor\sn} \bg(\coor\sn) \in L(T_{\coor\sn} Q, \reals^m)$
is the Jacobian matrix of the kinematic constraints
at time instant~$\mytime\sn$,
$\bchi\sn \in \reals^m$ is the corresponding vector of Lagrange multipliers,
and $\force\snpm\in T_{\coor\snpm}^*Q$ represents the vector
of generalized external loads, similarly for $\force\snmm$;
and, \textit{iii}) the kinematic constraints at time instant $\mytime\snp$,
\begin{equation}
  \bgcoornp = \zero,
\end{equation}
a finite set of integrable restrictions that belongs to $\reals^m$.
As usual in the finite element setting,
we assume that the Jacobian matrix of the displacement-based strains $\bBcoor$
and the Jacobian matrix of the constraints $\bG(\coor)$ are linear in $\coor$,
yielding substantial simplifications when calculating higher-order derivatives.
Since the existence of an energy function is in general not guaranteed,
energy-momentum methods are not directly applicable
\cite{Gonzalez1996,McLachlan1999,Betsch2010,Romero2012,Gebhardt2019c}.

\pagebreak[1]

Now, to brief\/ly investigate the conservation properties
of the adopted time integration scheme,
let us first neglect the external forces and define the discrete momenta:
\begin{subequations}
  \begin{align}
    \bm{p}^{+}_{\timeindex}
    &=
    \mass \frac{\coor\sn - \coor\snm}{\timestep} -
    \frac{\timestep}{2} \bBcoornmmT \stress\snmm -
    \frac{\timestep}{2} \bGcoornT \bchi\sn \in T^{*}_{\coor\sn}Q, \\
    \bm{p}^{-}_{\timeindex}
    &=
    \mass \frac{\coor\snp - \coor\sn}{\timestep} +
    \frac{\timestep}{2} \bBcoornpmT \stress\snpm +
    \frac{\timestep}{2} \bGcoornT \bchi\sn \in T^{*}_{\coor\sn}Q.
  \end{align}
\end{subequations}
These definitions are inspired by the discrete Legendre transforms
that are widely used in the context of variational integrators.
Having at hand the discrete momenta,
the discrete balance equation can be rewritten as
$\bm{p}^{+}_{\timeindex} - \bm{p}^{-}_{\timeindex} = \zero$,
which leads to the existence of a unique momentum
$\bm{p}\sn$ at time instant $\mytime\sn$.
In this discrete setting,
there are two possible definitions of linear momentum, namely
\begin{equation}
  \bm{l}^{-}_{\timeindex}
  =
  \sum_{a=1}^{N_{\textrm{nodes}}}
  \mathfrak{L}_{\textrm{trans}} (\bm{p}^{-}_{\timeindex})_a
  \quad\textrm{and}\quad
  \bm{l}^{+}_{\timeindex}
  =
  \sum_{a=1}^{N_{\textrm{nodes}}}
  \mathfrak{L}_{\textrm{trans}} (\bm{p}^{+}_{\timeindex})_a
  ,
\end{equation}
in which $\mathfrak{L}_{\textrm{trans}}$
filters out all non-translational contributions
and $N_{\textrm{nodes}}$ denotes the number of nodes.
Provided that the system under consideration is invariant under translations,
\ie the orthogonality between the internal forces
and the infinitesimal generator of translation is given,
a unique discrete linear momentum does exist, namely
$\bm{l}_d(\coor\sn, \coor\snp) =
\bm{l}^{-}_{\timeindex} = \bm{l}^{+}_{\timeindex}$,
and is an invariant of the discrete motion,
whose conservation law reads
\begin{equation}
  \bm{l}_d(\coor\sn, \coor\snp) - \bm{l}_d(\coor\snm, \coor\sn) = \zero.
\end{equation}
Likewise, there are two possible definitions of angular momentum, namely
\begin{equation}
  \bm{j}^{-}_{\timeindex}
  =
  \sum_{a=1}^{N_{\textrm{nodes}}}
  (\coor\sn)_a \times (\bm{p}^{-}_{\timeindex})_a
  \quad\textrm{and}\quad
  \bm{j}^{+}_{\timeindex}
  =
  \sum_{a=1}^{N_{\textrm{nodes}}}
  (\coor\snp)_a \times (\bm{p}^{+}_{\timeindex})_a.
\end{equation}
Similarly, provided that the system under consideration
is invariant under rotations,
\ie the orthogonality between the internal forces
and the infinitesimal generator of the rotation is given,
a unique discrete angular momentum does exist, namely
$\bm{j}_d(\coor\sn, \coor\snp) =
\bm{j}^{-}_{\timeindex} = \bm{j}^{+}_{\timeindex}$,
and is an invariant of the discrete motion,
whose conservation law reads
\begin{equation}
  \bm{j}_d(\coor\sn, \coor\snp) - \bm{j}_d(\coor\snm, \coor\sn) = \zero.
\end{equation}
Lastly, to avoid problems
caused by overdetermination and singular KKT matrices
in the subsequent optimization problems,
we eliminate the Lagrange multipliers
from \eqref{eq:equilibrium}
by means of the null-space method.
This requires a null-space basis matrix
$\nullmatcoorn \in L(\reals^n, T_{\coor\sn} Q)$ for
$\ker(\bG(\coor\sn)) =
\{ \bn \in T_{\coor\sn} Q \mid \bG(\coor\sn) \bn = \zero \in \reals^m \}$
with $n = \dim(Q) - m$, the system's number of degrees of freedom,
and $\mathrm{rank}(\nullmatcoorn) = \dim(\ker(\bG(\coor\sn))) = n$,
such that
\begin{equation}
  \bG(\coor\sn) \nullmatcoorn = \zero.
\end{equation}
Then, dynamic equilibrium adopts the form
\begin{subequations}
  \begin{align}
    \zero
    & =
    \nullmatcoorn \sdot \Bigl(
      \mass \frac{\coor\snp - 2 \coor\sn + \coor\snm}{\timestep} +
      \frac{\timestep}{2}
      \bigl( \bBcoornmmT \stress\snmm + \bBcoornpmT \stress\snpm \bigr) -
      \frac{\timestep}{2} \bigl( \force\snmm + \force\snpm \bigr)
    \Bigr) \\
    &= \pseudobalance,
  \end{align}
\end{subequations}
where only the dependency on unknown quantities
is explicitly indicated in $\pseudobalance$.

\subsection{The ``exact'' discrete-continuous nonlinear optimization problem}

\begin{definition}[Exact DCNLP]
  Employing directly the strain and stress measurements,
  each successive ``\DDCD'' problem can be defined as a
  discrete-continuous nonlinear optimization problem that can be stated as
  \begin{equation}
    \begin{split}
      \min_{(\straindata, \stressdata, \coor\snp, \strain\snpm, \stress\snpm)}
      \quad &
      \frac12 \|\strain\snpm - \straindata\|^2_{\weight} +
      \frac12 \|\stress\snpm - \stressdata\|^2_{\iweight} \\
      \mathrm{subject~to} \quad \quad \quad 
      &\strain\snpm - \straincoornpm = \zero, \\
      &\pseudobalance = \zero, \\
      &\bgcoornp = \zero.
    \end{split}
  \end{equation}
\end{definition}
Notice that the discrete variables
$(\straindata, \stressdata) \in \datacmanifold$
at time instant $t\snpm$ appear only in the cost function.
For fixed $(\straindata, \stressdata)$,
the exact DCNLP becomes a smooth nonlinear optimization problem (NLP),
referred to as \fixNLP.
Any solution provides a set of values $(\coor\snp, \strain\snpm, \stress\snpm)$
that (locally) minimizes the cost function
for the fixed data point under the constraints given above.

\begin{theorem}
  The first-order optimality conditions of \fixNLP~are:
  \begin{subequations}
    \label{eq:KKT-cond-fixNLP}
    \begin{align}
      &\delta\coor\snp:
      &-\onehalf \bBcoornpmT \blambda + \bFstressnpmT \bmu +
      \bGcoornpT \bnu &= \zero, \\
      &\delta\strain\snpm:
      &\weight (\strain\snpm - \straindata) + \blambda &= \zero, \\
      &\delta\stress\snpm:
      &\iweight (\stress\snpm - \stressdata) +
      \frac{\timestep}{2} \bBcoornpm \nullmatcoorn \bmu &= \zero, \\
      &\delta\blambda: & \strain\snpm - \straincoornpm &= \zero, \\
      &\delta\bmu: & \pseudobalance &= \zero, \\
      &\delta\bnu: & \bgcoornp &= \zero,
    \end{align}
  \end{subequations}
  where we define
  \begin{equation}
    \bFstressnpm := \pd{\coor\snp} \pseudobalance =
    \nullmatcoornT \Bigl( \frac{1}{\timestep} \mass +
    \frac{\timestep}{4} \bU_2 (\stress\snpm) \Bigr)
    \quad 
  \end{equation}
  with
  \begin{equation}
    \bU_2(\stress) := \pd{\coor}(\bBcoorT \stress) = \bU_2(\stress)^T.
  \end{equation}
\end{theorem}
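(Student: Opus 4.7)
The plan is to write down the Lagrangian of \fixNLP, compute its partial derivatives with respect to each primal and dual variable, and verify that these yield exactly the stated KKT system. Concretely, introduce multipliers $\blambda$, $\bmu$, $\bnu$ for the three equality constraints and form
\begin{equation*}
  \mathcal{L} = \tfrac12 \|\strain\snpm - \straindata\|^2_{\weight} + \tfrac12 \|\stress\snpm - \stressdata\|^2_{\iweight} + \blambda^T\bigl(\strain\snpm - \straincoornpm\bigr) + \bmu^T \pseudobalance + \bnu^T \bgcoornp.
\end{equation*}
The three ``dual'' stationarity conditions $\delta\blambda, \delta\bmu, \delta\bnu$ reproduce the constraints by construction, and the condition $\delta\strain\snpm$ is immediate since $\strain\snpm$ appears only quadratically in the cost and linearly in the compatibility term.

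The less trivial part is the derivative $\delta\stress\snpm$. Since $\stress\snpm$ enters only via the term $(\timestep/2)\bBcoornpmT \stress\snpm$ inside $\pseudobalance$, one obtains $\bmu^T \partial_{\stress\snpm} \pseudobalance = (\timestep/2)\,\bmu^T \nullmatcoornT \bBcoornpmT$. Transposing yields $(\timestep/2) \bBcoornpm \nullmatcoorn \bmu$, which combined with $\iweight (\stress\snpm - \stressdata)$ gives condition (c).

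The key calculation is $\delta\coor\snp$, and the main obstacle here is the careful use of the chain rule through $\coor\snpm = (\coor\sn + \coor\snp)/2$. First, $\partial_{\coor\snp} \straincoornpm = \tfrac12 \bBcoornpm$, so the compatibility term contributes $-\tfrac12 \bBcoornpmT \blambda$. Next, by definition $\bFstressnpm := \partial_{\coor\snp}\pseudobalance$, and the constraint term contributes $\bFstressnpmT \bmu$; together with $\bGcoornpT \bnu$ from the last constraint, this gives (a). To establish the stated expression for $\bFstressnpm$, I would differentiate $\pseudobalance$ term by term: $\nullmatcoorn$ depends on $\coor\sn$ (not $\coor\snp$) and so passes through, the mass term yields $(1/\timestep)\nullmatcoornT \mass$, the term in $\stress\snmm$ and the external-force terms do not depend on $\coor\snp$, and the $\bBcoornpmT \stress\snpm$ term, via the chain rule and the definition of $\bU_2$, yields $(\timestep/4)\nullmatcoornT \bU_2(\stress\snpm)$. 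Summing gives the claimed formula.

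Finally, the symmetry $\bU_2(\stress) = \bU_2(\stress)^T$ follows from the stated assumption that $\bBcoor$ is linear in $\coor$: then $\bBcoorT \stress$ is itself linear in $\coor$, and moreover $\bBcoor$ is the Jacobian of $\straincoor$, so $\bBcoorT \stress$ is the $\coor$-gradient of the scalar $\stress^T\straincoor$ (treating $\stress$ as fixed); hence $\bU_2(\stress)$ is a Hessian and thus symmetric. Assembling the six conditions produces exactly \eqref{eq:KKT-cond-fixNLP}.
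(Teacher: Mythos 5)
Your proposal is correct and follows essentially the same route as the paper: form the Lagrangian of \fixNLP\ with multipliers $\blambda$, $\bmu$, $\bnu$ and set its variation with respect to each primal and dual variable to zero. You additionally spell out the chain-rule details that the paper states without derivation --- the factor $\onehalf$ coming from $\coor\snpm = (\coor\sn + \coor\snp)/2$, the term-by-term computation of $\bFstressnpm$, and the Hessian argument for the symmetry of $\bU_2$ --- and these computations are all accurate.
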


\begin{proof}
  The Lagrangian function of \fixNLP~is
  \begin{equation}
    \begin{split}
      \lagrangean\sfix\argvardyn &=
      \frac12 \|\strain\snpm - \straindata\|^2_{\weight} +
      \frac12 \|\stress\snpm - \stressdata\|^2_{\iweight} \\
      &+ \blambda \sdot \bigl( \strain\snpm - \straincoornpm \bigr) \\
      &+ \bmu \sdot \pseudobalance \\
      &+ \bnu \sdot \bgcoornp,
    \end{split}
  \end{equation}
  where $\blambda \in \reals^{\nstrain}$ are Lagrange multipliers
  of the compatibility equation at time instant $t\snpm$,
  $\bmu \in \reals^n$ are Lagrange multipliers
  of the balance equation premultiplied by the null-space basis matrix
  evaluated at time instant $t\sn$, and
  $\bnu \in \reals^m$ are Lagrange multipliers of kinematic constraints
  at the instant $t\snp$.
  To derive the corresponding first-order optimality conditions,
  we calculate the variation of $\lagrangean\sfix$ as
  \begin{equation}
    \delta \lagrangean\sfix\argvardyn =
    \pd{\bx\sfix} \lagrangean\sfix(\bx\sfix; \straindata, \stressdata)
    \delta\bx\sfix
  \end{equation}
  with the primal-dual NLP variable vector
  \begin{equation}
    \bx\sfix :=
    (\coor\snp^T, \strain\snpm^T, \stress\snpm^T, \blambda^T, \bmu^T, \bnu^T)^T,
  \end{equation}
  obtaining
  \begin{equation}
    \begin{split}
      \pd{\bx\sfix} \lagrangean\sfix(\bx\sfix; \straindata, \stressdata)
      \delta\bx\sfix
      &= \delta\coor\snp \sdot \Bigl(
        -\onehalf \bBcoornpmT \blambda +
        \bigl( \pd{\coor\snp} \pseudobalance \bigr)^T \bmu +
        \bGcoornp^T\bnu \Bigr) \\
      &+ \delta\strain\snpm \sdot
      \bigl( \weight (\strain\snpm - \straindata) + \blambda \bigr) \\
      &+ \delta\stress\snpm \sdot \Bigl(
        \iweight (\stress\snpm - \stressdata) +
        \frac{\timestep}{2} \bBcoornpm \nullmatcoorn \bmu \Bigr) \\
      &+ \delta\blambda \sdot \bigl( \strain\snpm - \straincoornpm \bigr) \\
      &+ \delta\bmu \sdot \pseudobalance \\
      &+ \delta\bnu \sdot \bgcoornp.
    \end{split}
  \end{equation}
  Setting this to zero for any choice of the varied quantities
  yields the KKT conditions \eqref{eq:KKT-cond-fixNLP}.
\end{proof}

Notice that $\strain\snpm$ and $\blambda$ can be eliminated by substitution,
but as we are interested in the problem's global format,
we are not going to eliminate anything unless strictly necessary.

The linearization of the variation of $\lagrangean\sfix$ reads
\begin{equation}
  \Delta \delta\lagrangean\sfix(\bx\sfix; \straindata, \stressdata)
  =
  \delta\bx\sfix \sdot
  \KKT\sfix(\coor\snp, \stress\snpm, \blambda, \bmu, \bnu)
  \Delta\bx\sfix,
\end{equation}
where the KKT matrix $\KKT\sfix$ is symmetric indefinite and can be written
\begin{equation}
  \begin{gathered}
    \KKT\sfix(\coor\snp, \stress\snpm, \blambda, \bmu, \bnu) = \\
    \begin{bmatrix}
      -\frac14 \bU_2(\blambda) + \bV(\bnu) & \zero &
      \frac{\timestep}{4} \bU_1(\nullmatcoorn\bmu)^T &
      -\onehalf \bBcoornpmT & \bFstressnpmT & \bGcoornpT \\
      \zero & \weight & \zero & \bI & \zero & \zero \\
      \frac{\timestep}{4} \bU_1(\nullmatcoorn\bmu) & \zero & \iweight &
      \zero & \frac{\timestep}{2} \bBcoornpm \nullmatcoorn & \zero \\
      -\onehalf \bBcoornpm & \bI & \zero & \zero & \zero & \zero \\
      \bFstressnpm & \zero & \frac{\timestep}{2} \nullmatcoornT \bBcoornpmT &
      \zero & \zero & \zero \\
      \bGcoornp & \zero & \zero & \zero & \zero & \zero
    \end{bmatrix}
  \end{gathered}
\end{equation}
with
\begin{equation}
  \bV(\bm{\nu}) := \pd{\coor}(\bGcoorT\bm{\nu})=\bV(\bm{\nu})^T\quad\textrm{and}\quad\bU_1(\ba) := \pd{\coor}(\bBcoor \ba)
\end{equation}
for any constant vector $\ba \in \reals^{n+m}$.
As the KKT matrix $\KKT\sfix$ is non-singular,
all local minima are strict minima
and \fixNLP\ can be solved by local Sequential Quadratic
Programming methods.

The overall DCNLP can be treated by meta-heuristic methods.
Since it has no useful structure
with respect to the discrete variables
$(\straindata, \stressdata) \in \datacmanifold$,
a mathematically rigorous solution requires enumeration,
that is, finding the minimal value over all measurements
$(\straindata, \stressdata) \in \datacmanifold$
by solving every \fixNLP\ globally.
Therefore we suggest a different approach:
we propose to add suitable structure that enables us
to replace the DCNLP with a single approximating NLP,
as already done in the static case \cite{Gebhardt2019d}.

\subsection{The ``approximate'' nonlinear optimization problem}

The idea here is to replace the measurement data set $\datacmanifold$
by enforcing the state to belong to a reconstructed constitutive manifold
that has a precise mathematical structure
and that is derived from the data set.
The underlying assumption is, of course, that such a constitutive manifold exists
and that we can reconstruct a (smooth) implicit representation $\bh$.
The reconstructed constitutive manifold (an approximation) will enormously
facilitate the task of the data-driven solver,
avoiding the cost of solving a DCNLP,
either by enumeration,
or by heuristic or meta-heuristic methods
which can in general only provide approximate solutions
that strongly depend on the initial guess and whose convergence properties
are inferior when compared to gradient-based methods.
\begin{definition}
  An ``approximate'' constitutive manifold is defined as
  \begin{equation}
    \reccmanifold
    :=
    \{(\strainrec, \stressrec) \in \reals^{2 \nstrain} \mid
    \bhstrainstress = \zero \in \reals^{\nstrain}\}
    .
  \end{equation}
  It satisfies
  \begin{equation}
    \| \bh(\straindata, \stressdata) \| \le \varepsilon
    \quad \forall \, (\straindata, \stressdata) \in \datacmanifold
  \end{equation}
  for some accuracy $\varepsilon > 0$.
  Additionally, physical consistency requires that
  $\bh(\strainrec, \zero) = \zero$ implies $\strainrec = \zero$ and
  $\bh(\zero, \stressrec) = \zero$ implies $\stressrec = \zero$.
\end{definition}
A constitutive manifold is said to be thermomechanically consistent
if it is derived from an energy function $\Psi$
such that the following functional structure holds
\cite{Crespo2017,Ibanez2017,Gebhardt2019d}:
\begin{equation}
  \bhstrainstress = \stressrec - \pd{\strainrec} \Psi(\strainrec).
\end{equation}
However, in the case of new composite materials
or metamaterials that exhibit non-convex responses,
the reconstruction of the energy function may not be very convenient.
More importantly, in some cases the formulation
of an energy function may not even be possible.
Thus, we adopt the constitutive manifold $\reccmanifold$
as introduced previously without assuming any special functional structure
of the constitutive constraint $\bh$.
Further specializations are possible and should be instantiated
for specific applications of the proposed formulation.

\begin{definition}[Approximate NLP]
  Each successive ``\DDCD'' problem can be approximated
  as a nonlinear optimization problem of the form
  \begin{equation}
    \begin{split}
      \min_{(\strainrec, \stressrec, \coor\snp, \strain\snpm, \stress\snpm)}
      \quad &
      \frac12 \|\strain\snpm - \strainrec\|^2_{\weight} +
      \frac12 \|\stress\snpm - \stressrec\|^2_{\iweight} \\
      \mathrm{subject~to} \quad \quad \quad
      &\strain\snpm - \straincoornpm = \zero, \\
      &\pseudobalance = \zero, \\
      &\bgcoornp = \zero, \\
      &\bhstrainstress = \zero.
    \end{split}
  \end{equation}
\end{definition}

\begin{theorem}
  The first-order optimality conditions of the approximate NLP are:
  \begin{subequations}
    \label{eq:KKT-cond-aNLP}
    \begin{align}
      &\delta\strainrec:
      &-\weight (\strain\snpm - \strainrec) +
      \bigl( \pd{\strainrec} \bhstrainstress \bigr)^T \bxi &= \zero, \\
      &\delta\stressrec:
      &-\iweight (\stress\snpm - \stressrec) +
      \bigl( \pd{\stressrec} \bhstrainstress \bigr)^T \bxi &= \zero, \\
      &\delta\coor\snp:
      &-\onehalf \bBcoornpmT \blambda +
      \bFstressnpmT \bmu + \bGcoornpT \bnu &= \zero,\\
      &\delta\strain\snpm:
      &\weight (\strain\snpm - \straindata) + \blambda &= \zero, \\
      &\delta\stress\snpm:
      &\iweight (\stress\snpm - \stressdata) +
      \frac{\timestep}{2} \bBcoornpm \nullmatcoorn \bmu &= \zero, \\
      &\delta\blambda: &\strain\snpm - \straincoornpm &= \zero, \\
      &\delta\bmu: &\pseudobalance &= \zero, \\
      &\delta\bnu: &\bgcoornp &= \zero,\\
      &\delta\bxi: &\bhstrainstress &= \zero.
    \end{align}
  \end{subequations}
\end{theorem}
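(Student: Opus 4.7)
The plan is to mirror the proof of the previous theorem: form the Lagrangian of the approximate NLP, vary it with respect to the augmented primal-dual vector $\argvardynapp$, and equate the coefficient of each independent variation to zero.

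First, I would write the Lagrangian as
\begin{equation}
  \lagrangean\argvardynapp = \frac12\|\strain\snpm-\strainrec\|^2_{\weight} + \frac12\|\stress\snpm-\stressrec\|^2_{\iweight} + \blambda\sdot(\strain\snpm-\straincoornpm) + \bmu\sdot\pseudobalance + \bnu\sdot\bgcoornp + \bxi\sdot\bhstrainstress,
\end{equation}
which differs from $\lagrangean\sfix$ only in that $(\strainrec,\stressrec)$ is now an optimization variable rather than fixed data, and in the additional summand $\bxi\sdot\bhstrainstress$ coupling the new Lagrange multiplier $\bxi\in\reals^{\nstrain}$ to the reconstructed constitutive manifold.

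For the six ``inherited'' variations --- $\delta\coor\snp$, $\delta\strain\snpm$, $\delta\stress\snpm$, $\delta\blambda$, $\delta\bmu$, and $\delta\bnu$ --- the functional dependence of $\lagrangean$ on these variables is structurally identical to that of $\lagrangean\sfix$ (the new term $\bxi\sdot\bh$ involves none of them), so the corresponding stationarity and primal-feasibility rows follow verbatim from the previous proof without further calculation. The three genuinely new rows arise from $\delta\strainrec$, $\delta\stressrec$, and $\delta\bxi$: differentiating $\frac12\|\strain\snpm-\strainrec\|^2_{\weight}$ with respect to $\strainrec$ produces $-\weight(\strain\snpm-\strainrec)$, which combines with $\bigl(\pd{\strainrec}\bhstrainstress\bigr)^T\bxi$ coming from the new constraint; the analogous calculation for $\stressrec$ yields the second new row; and varying $\bxi$ simply recovers primal feasibility $\bhstrainstress=\zero$. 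Collecting all nine expressions and demanding that each vanish for arbitrary variations produces \eqref{eq:KKT-cond-aNLP}.

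The main obstacle is purely bookkeeping. I would pay close attention to the sign flips from the inner derivative of the quadratic cost with respect to the reconstructed variables (which is where the minus signs $-\weight$ and $-\iweight$ in the first two new rows originate) and to the transpose convention for the partial Jacobians $\pd{\strainrec}\bh$ and $\pd{\stressrec}\bh$, which act on $\bxi$ and therefore appear transposed. No constraint qualification, null-space manipulation, or multiplier-uniqueness argument is required at this stage, since the theorem only asserts the form of the KKT conditions rather than their sufficiency.
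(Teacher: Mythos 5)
Your proposal matches the paper's own proof essentially verbatim: the paper likewise writes the Lagrangian as $\lagrangean\sfix(\bx\sfix;\strainrec,\stressrec)+\bxi\sdot\bhstrainstress$, varies it with respect to the augmented vector $\bx\sone=(\strainrec^T,\stressrec^T,\bx\sfix^T,\bxi^T)^T$, and reads off the nine stationarity and feasibility rows, with the six inherited rows unchanged and the two new sign-flipped rows coming exactly from differentiating the quadratic cost in $\strainrec$ and $\stressrec$. Your observation that the $\delta\strain\snpm$ and $\delta\stress\snpm$ rows should now reference $\strainrec$ and $\stressrec$ rather than the fixed data point is in fact more careful than the theorem as printed.
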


\begin{proof}
  The Lagrangian function of the approximate NLP is given by
  \begin{equation}
    \begin{split}
      \lagrangean\sone\argvardynapp
      &=
      \lagrangean\sfix(\bx\sfix; \strainrec, \stressrec) +
      \bxi \sdot \bhstrainstress \\
      &=
      \frac12 \|\strain\snpm - \strainrec\|^2_{\weight} +
      \frac12 \|\stress\snpm - \stressrec\|^2_{\iweight} \\
      &+ \blambda \sdot \bigl( \strain\snpm - \straincoornpm \bigr) \\
      &+ \bmu \sdot \pseudobalance \\
      &+ \bnu \sdot \bgcoornp \\
      &+ \bxi \sdot \bhstrainstress
    \end{split}
  \end{equation}
  where $\bxi \in \reals^{\nstrain}$ are Lagrange multipliers
  that correspond to the enforcement of the strain and stress states
  to remain on the constitutive manifold.

  To find the first-order optimality conditions,
  the variation of $\lagrangean\sone$ is calculated as
  \begin{equation}
    \begin{split}
      \pd{\bx\sone} \lagrangean\sone(\bx\sone) \delta\bx\sone
      &=
      \delta\strainrec \sdot \bigl(
        \weight (\strainrec - \strain\snpm) +
        \bigl( \pd{\strainrec} \bhstrainstress \bigr)^T \bxi \bigr) \\
      &+ \delta\stressrec \sdot \bigl(
        \iweight (\stressrec - \stress\snpm) +
        \bigl( \pd{\stressrec} \bhstrainstress \bigr)^T \bxi \bigr) \\
      &+ \delta\coor\snp \sdot \Bigl(
        -\onehalf \bBcoornpmT \blambda + \bFstressnpmT \bmu + \bGcoornpT \bnu
        \Bigr) \\
      & +\delta\strain\snpm \sdot
      \bigl( \weight (\strain\snpm - \straindata) + \blambda \bigr) \\
      & +\delta\stress\snpm \sdot \Bigl(
        \iweight (\stress\snpm - \stressdata) +
        \frac{\timestep}{2} \bBcoornpm \nullmatcoorn \bmu \Bigr) \\
      &+ \delta\blambda \sdot \bigl( \strain\snpm - \straincoornpm \bigr) \\
      &+ \delta\bmu \sdot \pseudobalance \\
      &+ \delta\bnu \sdot \bgcoornp \\
      &+ \delta\bxi \sdot \bhstrainstress
    \end{split}
  \end{equation}
  with
  \begin{equation}
    \bx\sone := (\strainrec^T, \stressrec^T, \bx\sfix^T, \bxi^T)^T.
  \end{equation}
  Setting this to zero for any choice of the varied quantities
  yields the KKT conditions \eqref{eq:KKT-cond-aNLP}.
\end{proof}

The linearization of the variation of $\lagrangean\sone$ can be expressed as
\begin{equation}
  \Delta\delta\lagrangean\sone(\bx)
  =
  \delta\bx\sone \sdot
  \KKT\sone
  (\strainrec, \stressrec, \coor\snp, \stress\snpm, \blambda, \bmu, \bnu, \bxi)
  \Delta\bx\sone.
\end{equation}
Here the KKT matrix $\KKT\sone$ can be written as
\begin{equation}
  \begin{gathered}
    \KKT\sone
    (\strainrec, \stressrec, \coor\snp, \stress\snpm, \blambda, \bmu, \bnu, \bxi)
    = \\
    \begin{bmatrix}
      \KKT_{\oldstrainrec \oldstrainrec}(\strainrec, \stressrec, \bxi) &
      \KKT_{\oldstressrec \oldstrainrec}(\strainrec, \stressrec, \bxi)^T &
      \KKT_{\bx \oldstrainrec}^T &
      \pd{\strainrec} \bg(\strainrec,\stressrec)^T \\
      \KKT_{\oldstressrec \oldstrainrec}(\strainrec, \stressrec, \bxi) &
      \KKT_{\oldstressrec \oldstressrec}(\strainrec, \stressrec, \bxi) &
      \KKT_{\bx\sfix\oldstressrec}^T &
      \pd{\stressrec} \bg(\strainrec,\stressrec)^T \\
      \KKT_{\bx\sfix\oldstrainrec} &
      \KKT_{\bx\sfix\oldstressrec} &
      \KKT\sfix(\coor\snp, \stress\snpm, \blambda, \bmu, \bnu) &
      \zero \\
      \pd{\strainrec} \bg(\strainrec, \stressrec) &
      \pd{\stressrec} \bg(\strainrec, \stressrec) &
      \zero &
      \zero \\
    \end{bmatrix}
  \end{gathered}
\end{equation}
with
\begin{subequations}
  \begin{align}
    \KKT_{\oldstrainrec \oldstrainrec}(\strainrec, \stressrec, \bxi)
    &:=
    \weight + \pd{\strainrec}
    \bigl( \pd{\strainrec} \bg(\strainrec, \stressrec)^T \bxi \bigr) =
    \KKT_{\oldstrainrec \oldstrainrec}(\strainrec, \stressrec, \bxi)^T, \\
    \KKT_{\oldstressrec \oldstrainrec}(\strainrec, \stressrec, \bxi)
    &:=
    \pd{\stressrec}
    \bigl( \pd{\strainrec} \bg(\strainrec, \stressrec)^T \bxi \bigr), \\
    \KKT_{\oldstressrec \oldstressrec}(\strainrec, \stressrec, \bxi)
    &:=
    \iweight + \pd{\stressrec}
    \bigl( \pd{\stressrec} \bg(\strainrec, \stressrec)^T \bxi \bigr) =
    \KKT_{\oldstressrec \oldstressrec}(\strainrec, \stressrec, \bxi)^T, \\
    \KKT_{\bx\sfix \oldstrainrec}
    &:=
    \begin{bmatrix} \zero & -\weight & \zero & \zero & \zero \end{bmatrix}^T, \\
    \KKT_{\bx\sfix \oldstressrec}
    &:=
    \begin{bmatrix} \zero & \zero & -\iweight & \zero & \zero \end{bmatrix}^T.
  \end{align}
\end{subequations}
Again the KKT matrix $\KKT\sone$ is non-singular,
hence all local minima are strict and the approximate NLP is well solvable.


\section{Specialization of the proposed approach}

In this section, we describe a structural model
that is reformulated within the proposed setting of \DDCD.
The model is a \DD~\GEB~that is given in a
frame-invariant path-independent finite element formulation.
It relies on a kinematically constrained approach,
where the orientation of the cross section is described by means of
three vectors that are constrained to be mutually orthonormal.
This example has a very favorable mathematical structure
that is exploited to derive analytically all the ingredients related to the finite element formalism.

\subsection{\DD~\GEB (dynamic setting)}

The position of any point belonging to the beam shown in Figure 1 can be written as
\begin{equation}
  \bphi(\btheta)
  =
  \coorz(\theta^3) + \theta^1 \coori(\theta^3) + \theta^2 \coorj(\theta^3)\in \reals^3,
\end{equation}
in which $\coorz \in \reals^3$ is the position vector of the beam axis
and $\coori \in S^2$, $\coorj \in S^2$ together with $\coork \in S^2$
are three mutually orthonormal directors.
The directors can be described by means of the unit sphere,
which is a nonlinear, smooth, compact, two-dimensional manifold
that can be embedded in $\reals^3$ as
\begin{equation}
  \label{eq-stwo}
  S^{2} := \{\bm{d} \in \reals^3 \mid \bm{d} \sdot \bm{d} = 1\}.
\end{equation}
Special attention must be paid to the fact that this
manifold possesses no special algebraic structure,
specifically group-like structure \cite{Eisenberg1979}.
On that basis, the rotation tensor for the cross section is simply obtained as
$\rotation = \coori\otimes\uii+\coorj\otimes\ujj+\coork\otimes\ukk \in SO(3)$,
in which $\{\uii, \ujj, \ukk\}$ is the dual basis
of the ambient space $\euclideans^3$
($\reals^3$ with the standard Euclidean structure),
\ie the basis of the space of row vectors.
The group of rotations is a nonlinear, smooth, compact,
three-dimensional manifold defined as
\begin{equation}
  SO(3) :=
  \{
  \rotation \in \reals^{3 \times 3} \mid
  \rotation^T \rotation = \bI,
  \det\rotation = 1
  \}.
\end{equation}
In contrast to the unit sphere,
this manifold does possess a group-like structure
when considered with the tensor multiplication operation,
hence it is a Lie group.

\begin{figure}[tp]
  \centering
  \includegraphics[width=0.5\textwidth]{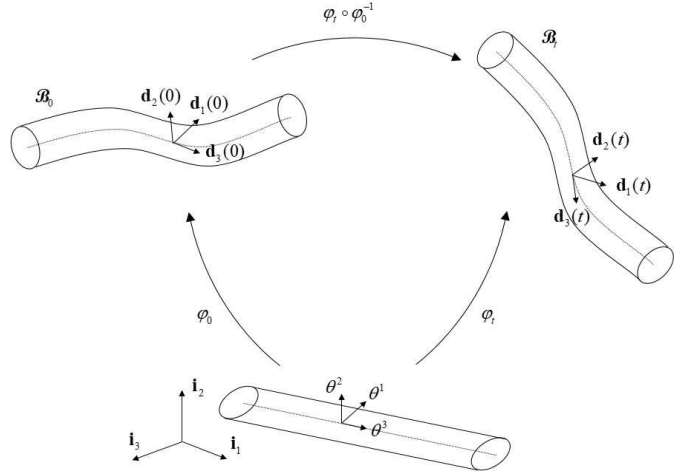}
  \caption{The geometrically exact beam: evolution among configurations through the regular motion $\bphi(\rdot; t) \circ \bphi(\rdot;0)^{-1}$.}
  \label{fig:beam}
\end{figure}

The set of parameters $\btheta = (\theta^1, \theta^2, \theta^3)$
is chosen in a way that the vector
$\bar{\btheta} = \theta^1 \coori + \theta^2 \coorj$
completely describes the cross section.
In the context of geometrically exact beams,
the doubly-covariant Green-Lagrange strain tensor, $\gl(\bphi)$,
can be simplified by eliminating quadratic strains.
Thus, its components are approximated as
\begin{equation}
  E_{ij}
  \approx
  \mathrm{symm}(\delta_{i3}\delta_{jk}((\gamma^k
  -\gamma^k_{\refe}
  )-\epsilon^k_{lm}\bar{\theta}^l(\omega^m
  -\omega^m_{\refe}
  ))),
\end{equation}
where $\mathrm{symm}(\cdot)$ stands for the symmetrization
of the tensor considered.
The subindex ``$\refe$'' indicates the stress-free configuration,
$\delta_{ij}$ denotes the Kronecker delta,
and $\epsilon^i_{jk}$ is the alternating symbol
that appears in the computation of the cross product
in three-dimensional Euclidean space.
From now on, we set $\theta^3 = \arclen$ to indicate every reference
related to the arc length of the beam.
The scalars $\gamma^i$ are the components of a first deformation vector defined as
\begin{equation}
  \bgamma
  =
  \begin{pmatrix}
    \coori \sdot \pd{\arclen} \coorz \\
    \coorj \sdot \pd{\arclen} \coorz \\
    \coork \sdot \pd{\arclen} \coorz
  \end{pmatrix}
  .
\end{equation}
For shear refer to first and second components,
and for elongation refer to the third one.
The scalars $\omega^i$ are the components of a second deformation vector defined as
\begin{equation}
  \bomega
  =
  \frac12
  \begin{pmatrix}
    \coork \sdot \pd{\arclen} \coorj - \coorj \sdot \pd{\arclen} \coork \\
    \coori \sdot \pd{\arclen} \coork - \coork \sdot \pd{\arclen} \coori \\
    \coorj \sdot \pd{\arclen} \coori - \coori \sdot \pd{\arclen} \coorj
  \end{pmatrix}
  .
\end{equation}
For bending refer to first and second components,
and for torsion refer to the third one.
For sake of compactness, let us introduce the vector
containing all kinematic fields,
\begin{equation}
  \coor(\arclen) = (\coorz(\arclen)^T, \coori(\arclen)^T, \coorj(\arclen)^T, \coork(\arclen)^T)^T,
\end{equation}
and the vector that gathers the two strain measures
obtained from the kinematic field,
\begin{equation}
  \straincoor =
  \begin{pmatrix}
    \bgamma\ofcoor - \bgamma_{\refe} \\
    \bomega\ofcoor - \bomega_{\refe}
  \end{pmatrix}
  .
\end{equation}
Additionally, we need to introduce the vector containing
all generalized ``strain'' fields that is going to be tied
by the compatibility equation,
\begin{equation}
  \strain =
  \begin{pmatrix}
    \bgamma - \bgamma_{\refe} \\
    \bomega - \bomega_{\refe}
  \end{pmatrix}
  ,
\end{equation}
and the vector containing the two generalized ``stress'' fields,
\begin{equation}
  \stress =
  \begin{pmatrix}
    \bforce \\
    \bmoment
  \end{pmatrix}
  ,
\end{equation}
which contains the cross sectional force and moment resultants,
\ie three force components and three moment components.

The operator that relates the variation of the displacement-based strains
to the variation of the kinematic fields through the relation
$\delta \straincoor = \Boper(\coor) \delta \coor$ has the explicit form
\begin{equation}
  \Boper(\coor) = \frac12
  \begin{bmatrix}
    2 \coori^T \pdarclen & 2 \pd{\arclen} \coorz^T & \zero & \zero \\
    2 \coorj^T \pdarclen & \zero & 2 \pd{\arclen} \coorz^T & \zero \\
    2 \coork^T \pdarclen & \zero & \zero & 2 \pd{\arclen} \coorz^T \\
    \zero & \zero & \coork^T \pdarclen - \pd{\arclen} \coork^T &
    \pd{\arclen} \coorj^T - \coorj^T \pdarclen \\
    \zero & \pd{\arclen} \coork^T - \coork^T \pdarclen &
    \zero & \coori^T \pdarclen - \pd{\arclen} \coori^T \\
    \zero & \coorj^T \pdarclen - \pd{\arclen} \coorj^T &
    \pd{\arclen} \coori^T - \coori^T \pdarclen & \zero
  \end{bmatrix}
  .
\end{equation}

The mass matrix per unit of length is given by
\begin{equation}
\mass =
\begin{bmatrix}
{} \mathscr{E}_{00}\bI & \mathscr{E}_{01}\bI & \mathscr{E}_{02}\bI & \zero \\
{} \mathscr{E}_{01}\bI & \mathscr{E}_{11}\bI & \mathscr{E}_{12}\bI & \zero \\
{} \mathscr{E}_{02}\bI & \mathscr{E}_{12}\bI & \mathscr{E}_{22}\bI & \zero \\
{}               \zero &               \zero &               \zero & \zero \\
\end{bmatrix}
,
\end{equation}
where $\bI$ is the $3\times3$-identity matrix and $\mathscr{E}_{ij}$ is computed by means of $\int_{\area}\varrho_0\theta^i\theta^j\mathrm{d}\area$ for $i$ and $j$ from $0$ to $2$, with $\varrho_0$ and  $\area$ representing the mass density per unit volume and the cross sectional area, both at the reference configuration, correspondingly.

To perform the spatial discretization of the geometrically exact beam
into two-node finite elements,
we approximate the kinematic fields as well as their
admissible variations with first-order Lagrangian functions. Upon such a discretization, we have that  $\Boper\leadsto\bB$.
The adopted numerical quadrature for the integration
of elemental contributions is the standard Gauss-Legendre quadrature rule.
As usual, the integrals involving internal terms are computed by means
of a one-point integration scheme that avoids shear locking issues.
Therefore, the evaluation of the kinematic fields at the single Gauss point
is in fact an average of the nodal values,
and their derivatives with respect to the arc length turn out to be
the simplest directed difference of the nodal values.
Moreover, even for coarse discretizations,
no additional residual stress corrections are necessary.
For an extensive treatment of geometrically exact beams
in the non-data-driven finite element setting, see
\cite{Romero2002, Romero2004, Betsch2002, Gebhardt2019a, Gebhardt2019b}.

Finally and as in \cite{Betsch2002}, the mutual orthonormality condition among the directors
is enforced at the nodal level by means of the internal constraint
\begin{equation}
  \bgcoor =
  \frac12 \begin{pmatrix}
    \coori \cdot \coori - 1 \\
    \coorj \cdot \coorj - 1 \\
    \coork \cdot \coork - 1 \\
    2 \coorj \cdot \coork \\
    2 \coori \cdot \coork \\
    2 \coori \cdot \coorj
  \end{pmatrix}
  .
\end{equation}
The associated Jacobian matrix is
\begin{equation}
  \bGcoor =
  \begin{bmatrix}
    \zero & \coori^T & \zero    & \zero \\
    \zero & \zero    & \coorj^T & \zero \\
    \zero & \zero    & \zero    & \coork^T \\
    \zero & \zero    & \coork^T & \coorj^T \\
    \zero & \coork^T & \zero    & \coori^T \\
    \zero & \coorj^T & \coori^T & \zero
  \end{bmatrix}.
\end{equation}

The null-space projector corresponding to the internal constraint
at the nodal level can be built by visual inspection of the Jacobian matrix as
\begin{equation}
  \nullproj =
  \begin{bmatrix}
    \bI & \zero & \zero & \zero \\
    \zero & \coorihat & \coorjhat & \coorkhat
  \end{bmatrix}^T,
\end{equation}
where the algebraic operator $\widehat{(\rdot)}$ emulates the cross product.

Since the formulation of constraints related to usual boundary conditions,
\eg rigid support, simple support, movable support \ia,
are represented by linear equations in the nodal variables and thus,
their treatment is straightforward, we omit their systematic presentation.
For further details, see \cite{Gebhardt2019a,Hente2019}.

Finally, having at hand all vectors and matrices
indicated within this subsection,
the construction of the equations corresponding to the
optimality conditions and their derivatives is straightforward.

\section{Numerical investigations}

In this section, we present three numerical examples that show the potential of the proposed approximate NLP for \DDCD. Specifically, we consider its specialization to the geometrically exact beam model. The first example presents a verification of the proposed formulation, taking the underlying three-director based standard FE formulation for geometrically exact beams that is combined with an energy-momentum time integration scheme as proposed in \cite{Gebhardt2019a, Gebhardt2019b}. We refer to that approach as EM-FEM. Such a reference numerical model is equipped with the simplest linear constitutive law. In the second example, we investigate the behavior of our framework for nonsymmetric explicitly-defined nonlinear constitutive laws, namely $\stress=\stress(\strain)$. Finally, we investigate in the third example the behavior of our framework for nonsymmetric implicitly-defined nonlinear constitutive laws, namely $\strain=\strain(\stress)$.

The problems considered along the first and second examples can also be addressed by any standard FE formulation in its dynamic setting. The problem considered along the third one can be easily addressed in the context of our approximate NLP approach, but not by standard FEM. Whether such a constitutive law is physically feasible or not will be considered in future works.

All the three examples are built on a curved beam structure whose geometry is described by a quarter of a circular arc with a total arc length of $1\,\textrm{m}$, which corresponds to a radius of $\frac{2}{\pi}\,\textrm{m}$. The (nonphysical) inertial properties are $\mathscr{E}_{00}= 10\,\textrm{Kg/m}$ and $\mathscr{E}_{11} = \mathscr{E}_{22} = 20\,\textrm{Kgm}$. The structure is uniformly discretized into $20$ two-node finite elements (\ie a total of $21$ nodes) and no further kinematic restrictions than the internal ones (orthonormality condition among the three directors) are enforced. Figure \ref{fig:resBeamStructure} shows the finite element representation and loads applied. The first node is located at the position $(0,0,0)\,\textrm{m}$. The nodes $2$--$4$ are loaded with vertical nodal forces $(0,0,-20)\,\textrm{N}$ and horizontal nodal forces $(-10,0,0)\,\textrm{N}$. For the nodes $8$--$14$ we have $(0,0,15)\,\textrm{N}$ and $(7.5,-7.5,0)\,\textrm{N}$. For the nodes $18$--$20$ we have $(0,0,-20)\,\textrm{N}$ and $(0,10,0)\,\textrm{N}$. This setting has been chosen because the action of combined spatial loads creates complex strain-stress states in space and time. Lastly, the loads are multiplied with a function that describes the variation of the external forces over time, which is defined by \eqref{eq:amplitude}, \ie~$\force=a(t)(f_1\uii+f_2\ujj+f_3\ukk)$. For all numerical examples the relative error-based tolerance of $10^{-12}$ has been set for the Newton iteration.

\begin{figure}[tp]
  \centering
  \vspace{3mm}
  \includegraphics[height=0.35\textheight]{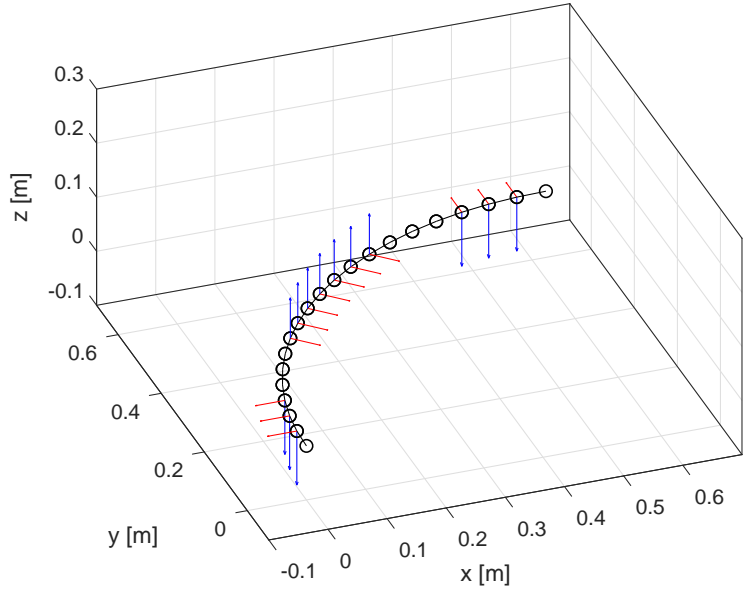}
  \caption{Finite element representation of the beam structure.
    Circles indicate nodes with internal constraints only.
    Blue and red arrows denote forces along the vertical direction $z$ and in the horizontal $x$-$y$-plane, respectively.}
  \label{fig:resBeamStructure}
\end{figure}

\begin{equation}
	a(t)=\left\{ \begin{array}{ccc}
		2t & \mathrm{for} & 0\leq t<0.5\\
		2-2t & \mathrm{for} & 0.5\leq t<1\\
		0 & \mathrm{for} & t\geq1
	\end{array}\right.
	\label{eq:amplitude}
\end{equation}

\subsection{Verification}

As indicated, we first consider the beam structure described above and the simplest linear constitutive law, defined as
\begin{equation}
(\bg^\sharp(\strainrec,\stressrec))^i =\check{s}^i-a^{ii}\check{e}_i=0
\qquad\textrm{or alternatively as}\qquad
(\bg_\flat(\strainrec,\stressrec))_i =\check{e}_i-a_{ii}\check{s}^i=0,
\end{equation}
with $a^{ii}=a^{-1}_{ii}$ and the (nonphysical) values $a^{11}=a^{22}= 75\,\mathrm{N}$, $a^{33}= 100\,\mathrm{N}$, $a^{44}=a^{55}= 100\,\mathrm{Nm^2}$ and $a^{66} = 200\,\mathrm{Nm^2}$. The weight matrix $\weight$ is defined as the identity.

\begin{table}[tp]
	\centering
	\def\-{$-$}
	\def\+{\hphantom{$-$}}
	\begin{tabular}{ccccc}
		\toprule
		      & $\coorz$ & $\coori$ & $\coorj$ & $\coork$ \\
		comp. & (m)      & (-)      & (-)      & (-)      \\
		\midrule
		$x$ & \+4.11665929 & \+0.40215049 & \+0.58161351 & \+0.70711267 \\
		$y$ & \-3.48003891 & \-0.40215496 & \-0.58161692 & \+0.70710732 \\
		$z$ & \-2.63091834 & \+0.82252438 & \-0.56873242 & \+0.00000122 \\
		\midrule
		$x$ & \+4.11667597 & \+0.40214954 & \+0.58161417 & \+0.70711267 \\
		$y$ & \-3.48005560 & \-0.40215401 & \-0.58161758 & \+0.70710733 \\
		$z$ & \-2.63093328 & \+0.82252532 & \-0.56873108 & \+0.00000123 \\
		\bottomrule
	\end{tabular}
	\caption{Verification (first example) - nodal variables of the $11^\textrm{th}$ node at $t=4$ s; EM-FEM (top) vs.\ approximate NLP (bottom).}
	\label{tab:emfemandanlpn11}
\end{table}

\newcommand\onepack[3]{%
	\begin{figure}[t]
		\centering
		\vspace{6mm}
		\includegraphics[width=0.7\textwidth]{#1_MotionSequence}
		\caption{#2 - sequence of motion.}
		\label{fig:#3}
	\end{figure}
}

\newcommand\twopackstress[3]{%
	\begin{figure}[tp]
		\centering
		\includegraphics[width=0.45\textwidth]{#1_ResultantForces}\hfill
		\includegraphics[width=0.45\textwidth]{#1_ResultantMoments}\\[2.45ex]
		\caption{#2 - resultant force per unit length at the $8^{\textrm{th}}$ element vs. time (left),
			          resultant moment per unit length at the $8^{\textrm{th}}$ element vs. time (right).}
		\label{fig:#3stress}
	\end{figure}
}

\newcommand\twopackmomentum[3]{%
	\begin{figure}[tp]
		\centering
		\includegraphics[width=0.45\textwidth]{#1_LinearMomentum}\hfill
		\includegraphics[width=0.45\textwidth]{#1_AngularMomentum}\\[2.45ex]
		\caption{#2 - linear momentum vs. time (left),
			          angular momentum vs. time (right).}
		\label{fig:#3momentum}
	\end{figure}
}

Figure \ref{fig:ex1verseq} shows a sequence of motion, where the initial configuration plotted in red is located at the topmost position. It is possible to observe that the structure undergoes large displacements and large changes of curvature. However, as the geometry of the structure and applied loads are symmetric, and the constitutive law considered is antisymmetric, the response computed is perfectly symmetric as expected. By starting each subsequent optimization problem warmly, \ie taking as initial guess the converged solution of the previous optimization problem, the approximate NLP solver requires three iterations on average to find the solution. Meanwhile, the EM-FEM solver requires four iterations on average. The EM-FEM warrants the discrete time invariance, and thus is a little bit harder to solve when compared to variational integrators, which do not render in general the discrete time invariance, see for instance \cite{Lew2004}. Table \ref{tab:emfemandanlpn11} presents the displacement and directors of the $11^\textrm{th}$ node at $t=4$ s for both formulations, namely results corresponding to the EM-FEM versus those corresponding to the approximate NLP. Table \ref{tab:emfemandanlplam} shows comparatively the stationary values for the linear and angular momenta. Both tables demonstrate that the results obtained with the approximate NLP are in excellent agreement with the results obtained with the EM-FEM. Figures \ref{fig:ex1verstress} and \ref{fig:ex1vermomentum} present several time histories for the time interval $[0,4]$ s: components of the cross sectional resultant force per unit length at the $8^\textrm{th}$ element, components of the cross sectional resultant moment per unit length at the $8^\textrm{th}$ element, components of the linear momentum, and components of the angular momentum. After an initial transient due to the presence of time varying loads, the resultant forces appear to reach a sort of oscillatory steady state. A similar observation can be made for the resultant moments. The linear and angular momenta reach, after the initial transient, stationary values that are identically preserved along the remainder of the simulation.

\onepack{ex1}{Verification (first example)}{ex1verseq}


\twopackstress{ex1}{Verification (first example)}{ex1ver}

\twopackmomentum{ex1}{Verification (first example)}{ex1ver}

\begin{table}[tp]
	\centering
	\def\-{$-$}
	\def\+{\hphantom{$-$}}
	\begin{tabular}{ccccc}
		\toprule
		&
		$\bm{l}$ (EM-FEM) & $\bm{l}$ (approx.~NLP) &
		$\bm{j}$ (EM-FEM) & $\bm{j}$ (approx.~NLP) \\
		comp. &
		(Kgm/s)     & (Kgm/s)     &
		(Kgm$^2$/s) & (Kgm$^2$/s) \\
		\midrule
		$x$ & \+11.25000000 & \+11.25000000 & \+1.39916065 & \+1.39915722 \\
		$y$ & \+11.25000000 & \+11.25000000 & \+6.17378053 & \+6.17377710 \\
		$z$ &\ \-7.50000000 &\ \-7.50000000 & \-7.16199129 & \-7.16199130 \\
		\bottomrule
	\end{tabular}
	\caption{Verification (first example) - linear and angular momenta; stationary values.}
	\label{tab:emfemandanlplam}
\end{table}

\subsection{Explicit stress definition}

In this second numerical example, we consider a nonlinear constitutive law given by
\begin{equation}
(\bg^\sharp(\strainrec,\stressrec))^i =\check{s}^i-a^{ii}\check{e}_i-\frac{b^{iii}}{2}(\check{e}_i)^2,
\end{equation}
where the stress resultants are defined \textit{explicitly} in terms of the strain measures.
For the coefficients $a^{ii}$, we use the same (nonphysical) values as in the verification example, and in addition we set $b^{iii} = 0.6375 a^{ii}$.

Figure \ref{fig:ex2verseq} shows a sequence of motion, in which we observe that the structure undergoes large displacements and large changes of curvature. Even, provided that the geometry of the structure and applied loads are symmetric, the response computed is, as expected due to the nonsymmetric constitutive law considered, nonsymmetric. Figures \ref{fig:ex2explicitstress} and \ref{fig:ex2explicitmomentum} present, as in the previous example, several time histories. After an initial transient due to the presence of time varying loads, the resultant forces appear to reach no steady state and show some high-frequency content. A similar comment can be made for the resultant moments. However, no high-frequency content is to be distinguished. As before, the linear and angular momenta reach, after the initial transient, stationary values that are identically preserved. Table \ref{tab:ex2explicit} presents the stationary values for the momenta. As a final comment, we can say that the approximate NLP requires normally three iterations to converge, when warmly started.

\onepack{ex2}{Explicit stress definition (second example)}{ex2verseq}


\twopackstress{ex2}{Explicit stress definition (second example)}{ex2explicit}

\twopackmomentum{ex2}{Explicit stress definition (second example)}{ex2explicit}

\begin{table}[tp]
	\centering
	\def\-{$-$}
	\def\+{\hphantom{$-$}}
	\begin{tabular}{ccc}
		\toprule
		&
		$\bm{l}$ (approx.~NLP) &
		$\bm{j}$ (approx.~NLP) \\
		comp. &
		(Kgm/s)     &
		(Kgm$^2$/s) \\
		\midrule
		$x$ & \+11.25000000 & \+2.18791717 \\
		$y$ & \+11.25000000 & \+5.90091563 \\
		$z$ &\ \-7.50000000 & \-7.37292831 \\
		\bottomrule
	\end{tabular}
	\caption{Explicit stress definition (second example) - linear and angular momenta; stationary values.}
	\label{tab:ex2explicit}
\end{table}

\subsection{Implicit stress definition}

In this third numerical example, we consider a nonlinear constitutive law given by
\begin{equation}
(\bg_\flat(\strainrec,\stressrec))_i =\check{e}_i-a_{ii}\check{s}^i-\frac{b_{iii}}{2}(\check{s}^i)^2,
\end{equation}
where the stress resultants are defined \textit{implicitly} in terms of the strain measures.
For the coefficients $a_{ii}$, we consider the same (nonphysical) values as in the verification example, and in addition we set $b_{iii} = 0.015 a_{ii}$.

Figure \ref{fig:ex3verseq} presents a sequence of motion, in which we observe that the structure is undergoing large displacements and large changes of curvature. As in the previous case, the response is nonsymmetric due to the nonsymmetry of the constitutive law employed. Figures \ref{fig:ex3implicitstress} and \ref{fig:ex3implicitmomentum} show several time histories for the same variables that were showed in the two previous cases. After an initial transient due to the presence of time varying loads, the resultant forces do not appear to reach a steady state, and in contrast to the previous case, they show no high-frequency content. A similar observation can be made for the resultant moments, although some high-frequency content is evident. The linear and angular momenta reach, as expected, stationary values that are preserved along the simulation. Finally, Table \ref{tab:ex3implicit} presents the stationary values for the momenta. Once again, the convergence properties are excellent as before.

\onepack{ex3}{Implicit stress definition (third example)}{ex3verseq}


\twopackstress{ex3}{Implicit stress definition (third example)}{ex3implicit}

\twopackmomentum{ex3}{Implicit stress definition (third example)}{ex3implicit}

\begin{table}[tp]
	\centering
	\def\-{$-$}
	\def\+{\hphantom{$-$}}
	\begin{tabular}{ccc}
		\toprule
		&
		$\bm{l}$ (approx.~NLP) &
		$\bm{j}$ (approx.~NLP) \\
		comp. &
		(Kgm/s)     &
		(Kgm$^2$/s) \\
		\midrule
		$x$ & \+11.25000000 & \+0.85015476 \\
		$y$ & \+11.25000000 & \+6.85601926 \\
		$z$ & \-7.50000000 &  \-6.84807263 \\
		\bottomrule
	\end{tabular}
	\caption{Implicit stress definition (third example) - linear and angular momenta; stationary values.}
	\label{tab:ex3implicit}
\end{table}

\section{Concluding remarks}

In this paper, we proposed
an approximate nonlinear optimization problem for \DDCD~that extends the approach recently proposed by the authors in a purely static context \cite{Gebhardt2019d}. In the dynamic setting, the resulting new formulation inherits the ability of handling: \textit{i}) kinematic constraints; and, \textit{ii}) materials whose stress-strain relationship can be implicitly approximated. Thus, the method is not limited by the requirement of some special functional structure for the definition of the material law. As the chosen dynamic setting is that inspired by a class of structure-preserving  variational integrators, it cannot be reverted to the static approach developed previously and therefore, it describes a different mathematical problem. For cases without external loads, however, the method preserves linear and angular momenta and for explicitly defined constitutive laws, it could be reverted to the discrete variational approach that preserves the underlying symplectic form. This aspect represents one main innovation of the current method.

The mathematical framework for \DDCD~was derived in detail and formulated in a self-contained fashion. To showcase the well-behaved properties of our approach, we specialized the method to the case of a geometrically exact beam formulation. We would like to emphasize that the extension of the \DDCD~paradigm to geometrically exact beam elements is another main innovation of the current work.
For the numerical examples considered in this work, the convergence behavior was observed to be excellent. Moreover, our dynamic approach fully inherits the robustness, efficiency and versatility properties of its static counterpart.

The results of the presented numerical examples clearly demonstrated that the proposed method represents a solid basis for further research. Possible research targets are the specialization to other structural models, \eg~shells and solids, the extension to solving dynamic problems in multiple time scales, the inclusion of
nonlinear optimization algorithms that can deal with inequalities and globalization techniques to warrant global convergence properties, \ia.

\section*{Acknowledgments}

\noindent C. G. Gebhardt and R. Rolfes gratefully acknowledge the financial support of the Lower Saxony Ministry of Science and Culture (research project \textit{ventus efficiens}, FKZ ZN3024) and the German Research Foundation (research project ENERGIZE, GE 2773/3-1 -- RO 706/20-1) that enabled this work. D. Schillinger acknowledges support from the German Research Foundation through the DFG Emmy Noether Award SCH 1249/2-1, and from the European Research Council via the ERC Starting Grant ``ImageToSim'' (Action No.\ 759001). \\


\bibliographystyle{ieeetr}

\bibliography{bib}

\end{document}